\def\NAT@spacechar{~}
\renewcommand{\ge}{\geqslant}
\renewcommand{\le}{\leqslant}
\renewcommand{\geq}{\geqslant}
\renewcommand{\leq}{\leqslant}
\newcommand{\arXiv}[1]{arXiv:\,\href{https://arxiv.org/abs/#1}{#1}}
\newcommand{\msn}[1]{MR:\,\href{https://www.ams.org/mathscinet-getitem?mr=MR#1}{#1}}
\newcommand{\doi}[1]{\url{https://dx.doi.org/#1}}
\theoremstyle{plain}
\newtheorem{theorem}{Theorem}
\newtheorem{lemma}[theorem]{Lemma}
\newtheorem{corollary}[theorem]{Corollary}
\begin{document}
	
	\begin{frontmatter}[classification=text]
\title{Planar Graphs Have Bounded\\ Nonrepetitive Chromatic Number\footnote{This revision of the original published paper corrects two errors. First,  ``$K_k+$'' has been added to \cref{AlmostEmbeddableStructure} and ``$k+$'' has been added in several places in \cref{Minors}. Second, the original published paper  erroneously stated that \citet{DJMMUW} proved \cref{GenusProduct}; they actually proved the weakening of \cref{GenusProduct} with ``treewidth at most\/ $3$'' replaced by ``treewidth at most\/ $4$''. This result leads to a bound of $\pi(G)\leq 1024\max\{2g,3\}$.}}

\author[vd]{Vida Dujmovi{\'c}\thanks{Research  supported by NSERC and the Ontario Ministry of Research and Innovation.}}
\author[le]{Louis Esperet\thanks{Partially supported by ANR Projects GATO (\textsc{anr-16-ce40-0009-01}) and GrR (\textsc{anr-18-ce40-0032}).}}
\author[gj]{Gwena\"el Joret\thanks{Research supported by an ARC grant from the Wallonia-Brussels Federation of Belgium.}}
\author[bw]{Bartosz Walczak\thanks{Research partially supported by National Science Centre of Poland grant 2015/17/D/ST1/00585.}}
\author[dw]{David R. Wood\thanks{Research supported by the Australian Research Council.}}
		
\begin{abstract}
A colouring of a graph is \emph{nonrepetitive} if for every path of even order, the sequence of colours on the first half of the path is different from the sequence of colours on the second half. We show that planar graphs have nonrepetitive colourings with a bounded number of colours, thus proving a conjecture of Alon, Grytczuk, Ha{\l}uszczak and Riordan (2002). We also generalise this result for graphs of bounded Euler genus, graphs excluding a fixed minor, and graphs excluding a fixed topological minor.
\end{abstract}

\end{frontmatter}
	
\section{Introduction}
\label{Introduction}

A vertex colouring of a graph is \emph{nonrepetitive} if there is no path for which the first half of the path is assigned the same sequence of colours as the second half.  More precisely, a $k$-\emph{colouring} of a graph $G$ is a function $\phi$ that assigns one of $k$ colours to each vertex of $G$.  A path $(v_1,v_2,\dots,v_{2t})$ of even order in $G$ is \emph{repetitively} coloured by $\phi$ if $\phi(v_i)=\phi(v_{t+i})$ for $i\in\{1,\dots,t\}$. A colouring $\phi$ of $G$ is \emph{nonrepetitive} if no path of $G$ is repetitively coloured by $\phi$. Observe that a nonrepetitive colouring is \emph{proper}, in the sense that adjacent vertices are coloured differently. The
\emph{nonrepetitive chromatic number} $\pi(G)$ is the minimum integer $k$ such that $G$ admits a nonrepetitive $k$-colouring.

The classical result in this area is by \citet{Thue06}, who proved in 1906 that every path is nonrepetitively 3-colourable.
Starting with the seminal work of \citet{AGHR02}, nonrepetitive colourings of general graphs have recently been widely studied; see the surveys \citep{Gryczuk-IJMMS07,Grytczuk-DM08,CSZ,Grytczuk-Berge,Skrabu15} and other references  \citep{GMP14,HJ-DM11,Currie-EJC02,HJSS11,Grytczuk-DM08,DJKW16,PZ09,DFJW13,BreakingRhythm,BaratWood-EJC08,BV-NonRepVertex07,DMW17,BK-AC04,GKM11,BGKNP-NonRepTree-DM07,Currie-TCS05,NOW11,KP-DM08,FOOZ,Pegden11,BV-NonRepEdge08,Grytczuk-EJC02,AGHR02,GPZ11,DS09,BC13,KT17,BDMR17,Thue06,GlJKM16,GKZ16,WW16,ZZ16,KPZ14,CG07,AG05,Przybyo14,Przybyo13,JS12,JS09,WW19,SS15a,PSFT18,Gryczuk-IJMMS07,CSZ,Grytczuk-Berge,Skrabu15}.

Several graph classes are known to have bounded nonrepetitive chromatic number. In particular, 
cycles are nonrepetitively 3-colourable (except for a finite number of exceptions) \citep{Currie-EJC02}, 
trees are nonrepetitively 4-colourable \citep{BGKNP-NonRepTree-DM07,KP-DM08}, outerplanar graphs are nonrepetitively $12$-colourable \citep{KP-DM08,BV-NonRepVertex07}, and more generally, every graph with treewidth $k$ is nonrepetitively $4^k$-colourable \citep{KP-DM08}. Graphs with maximum degree $\Delta$ are nonrepetitively $O(\Delta^2)$-colourable \citep{AGHR02,Gryczuk-IJMMS07,HJ-DM11,DJKW16}, and graphs excluding a fixed immersion have bounded nonrepetitive chromatic number \citep{WW19}. 

It is widely recognised that the most important open problem in the field of nonrepetitive graph colouring is whether planar graphs have bounded nonrepetitive chromatic number. It was first asked by \citet{AGHR02}. The best known lower bound is $11$, due to Ochem (see \citep{DFJW13}). The best known upper bound is $O(\log n)$ where $n$ is the number of vertices, due to \citet*{DFJW13}. Note that several works have studied colourings of planar graphs in which only facial paths are required to be nonrepetitively coloured \citep{HJSS11,Przybyo14,Przybyo13,BC13,JS12,JS09,BDMR17}.

This paper proves that planar graphs have bounded nonrepetitive chromatic number.

\begin{theorem}
\label{PlanarPi}
Every planar graph\/ $G$ satisfies\/ $\pi(G) \leq 768$.
\end{theorem}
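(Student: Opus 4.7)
The plan is to combine two ingredients: a structural decomposition of planar graphs into a simpler product, and a nonrepetitive colouring technique for such products.

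\emph{Structural step.} I would first invoke a product structure theorem for planar graphs asserting that every planar graph $G$ is isomorphic to a subgraph of a strong product $H \boxtimes P$, where $H$ has bounded treewidth (a small absolute constant) and $P$ is a path. Because the nonrepetitive chromatic number is monotone under subgraphs, this reduces the theorem to bounding $\pi(H \boxtimes P)$ in terms of $\mathrm{tw}(H)$ alone.

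\emph{Colouring lemma.} The heart of the argument is to prove that, for every graph $H$ of treewidth at most $k$ and every path $P$,
\[
\pi(H \boxtimes P) \leq c(k)
\]
for some explicit function $c$. To construct such a colouring, I would combine three palettes: first, a nonrepetitive colouring of $H$ using at most $4^k$ colours, provided by the Kündgen--Pelsmajer bound stated in the introduction; second, a Thue-style sequence colouring of $P$ using $3$ colours; and third, a small auxiliary labelling of $V(P)$ by residues modulo a fixed integer, used to distinguish the ways a path in the product can weave between adjacent layers. A vertex $(v,i) \in V(H) \times V(P)$ is then coloured by the tuple of its three labels.

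\emph{Verification (the main obstacle).} Given a candidate repetitively coloured path in $H \boxtimes P$, project it onto $H$ and onto $P$. The $P$-projection is a walk whose consecutive entries differ by at most $1$, while the $H$-projection is a walk that may revisit vertices. The main difficulty is to deduce that equality of colour sequences across the two halves of the path forces either a repetitively coloured path in $H$ (contradicting the first palette) or a repetition in the Thue sequence on $P$ (contradicting the second). This requires a careful case analysis of the possible layer-traversal patterns across the two halves, and the auxiliary third palette is introduced precisely to handle the degenerate cases in which the $H$-projection fails to be a simple path. Carrying out this structural reduction from a repetition in the product to a repetition in one of the factors constitutes the bulk of the technical work.

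Once the lemma is established, the explicit bound $\pi(G) \leq 768$ follows by substituting the treewidth constant from the product structure theorem and multiplying by the sizes of the Thue and auxiliary palettes; the resulting arithmetic is routine.
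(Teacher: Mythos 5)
Your high-level plan matches the paper's skeleton: invoke a product structure theorem to embed $G$ in a strong product with a bounded-treewidth factor and a path factor, then colour the product by tupling colourings of the factors. But there is a genuine gap in the verification step, and you have correctly flagged where it lives without having the idea that closes it.

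The obstacle you identify is real: the $H$-projection of a repetitively coloured path in the product is a \emph{walk} that may revisit vertices, so an ordinary nonrepetitive colouring of $H$ (the K\"undgen--Pelsmajer $4^k$ bound) gives you nothing directly. Your proposed fix --- a Thue $3$-colouring of $P$ together with an auxiliary residue labelling --- is not enough. A Thue sequence only controls genuine paths in $P$; the $P$-projection of a walk in $H \boxtimes P$ is a \emph{lazy walk} (consecutive entries may coincide), and a lazy walk that is repetitively coloured under a Thue $3$-colouring need not force any useful structure. More importantly, no amount of extra labelling on the $P$-side can repair the fact that the $H$-projection can repeat vertices: you need a \emph{stronger property of the $H$-colouring itself}. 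The paper's key new idea is the notion of a \emph{strongly nonrepetitive} colouring (parameter $\pi^*$): a colouring in which every repetitively coloured lazy walk $v_1,\dots,v_{2k}$ must satisfy $v_i = v_{i+k}$ for some $i$. The paper reproves the treewidth bound in this strengthened form, $\pi^*(H) \le 4^k$, and shows that $\pi^*$ behaves well under strong product with a path ($\pi^*(G\boxtimes P)\le 4\pi^*(G)$, using a $4$-colouring of $P$ in which every repetitive lazy walk is boring --- note this requires $4$ colours, not $3$) and under strong product with a clique ($\pi^*(G\boxtimes K_\ell)\le \ell\,\pi^*(G)$). These compose cleanly.

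There is also a quantitative slip in your structural step. The product structure theorem the paper uses is $G \subseteq H \boxtimes P \boxtimes K_3$ with $\operatorname{tw}(H)\le 3$, and the $K_3$ factor matters: $768 = 4^3\cdot 4\cdot 3$. A version without the $K_3$ factor exists but requires $\operatorname{tw}(H)\le 8$, which would give a vastly larger bound. Your sketch drops the $K_3$ while keeping the ``small constant'' treewidth, which as stated is not a correct form of the theorem.
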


We generalise this result for graphs of bounded Euler genus, for graphs excluding any fixed minor, and for graphs excluding any fixed topological minor.\footnote{The \textit{Euler genus} of the orientable surface with $h$ handles is $2h$. The \textit{Euler genus} of the non-orientable surface with $c$ cross-caps is $c$. The \textit{Euler genus} of a graph $G$ is the minimum integer $k$ such that $G$ embeds in a surface of Euler genus $k$. Of course, a graph is planar if and only if it has Euler genus 0; see \citep{MoharThom} for more about graph embeddings in surfaces. A graph $X$ is a \textit{minor} of a graph $G$ if a graph isomorphic to $X$ can be obtained from a subgraph of $G$ by contracting edges. A graph $X$ is a \emph{topological minor} of a graph $G$ if a subdivision of $X$ is a subgraph of $G$. If $G$ contains $X$ as a topological minor, then $G$ contains $X$ as a minor. If $G$ contains no $X$ minor, then $G$ is \emph{$X$-minor-free}. If $G$ contains no $X$ topological minor, then $G$ is \emph{$X$-topological-minor-free.} }  
The result for graphs excluding a fixed minor confirms a conjecture of \citet{Gryczuk-IJMMS07,Grytczuk-DM08}.  

\begin{theorem}
\label{GenusPi}
Every graph\/ $G$ with Euler genus\/ $g$ satisfies\/ $\pi(G) \leq 256 \max\{2g,3\}$.
\end{theorem}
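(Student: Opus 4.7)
My plan is to prove \cref{GenusPi} by a surface-cutting reduction to the planar case handled by \cref{PlanarPi}. Embed $G$ in a surface $\Sigma$ of Euler genus $g \ge 1$. Standard surface topology yields a system of $2g$ noncontractible simple closed curves in $\Sigma$ whose removal leaves an open disk. Perturbing these curves to cross $G$ transversely (possibly subdividing edges, which does not change $\pi$), they trace out subgraphs $H_1,\dots,H_{2g}\subseteq G$ such that $G_0:=G-\bigcup_i V(H_i)$ is planar. Apply \cref{PlanarPi} to obtain a nonrepetitive $768$-colouring of $G_0$.

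The remaining task is to extend this colouring to the vertices of $H_1,\dots,H_{2g}$ and to verify that no path of $G$ is repetitively coloured. Each $H_i$ is essentially a cycle and is therefore nonrepetitively $4$-colourable on its own. The subtlety is that a path in $G$ can oscillate many times between $G_0$ and the $H_i$'s, so the two colourings must interact correctly along their common boundary; a disjoint union of palettes is not enough, and some kind of product or interleaved construction is required.

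The bound $256\max\{2g,3\}$ — which equals $768 = 256\cdot 3$ at $g\le 1$ and $512g$ for $g\ge 2$ — rules out a naive additive combination of palettes (which would give roughly $768+O(g)$ and would not match the linear-in-$g$ growth). Instead the combination must be multiplicative, with the ``$3$'' in $768 = 256\cdot 3$ replaced by $2g$. The main obstacle is therefore that \cref{PlanarPi} is probably not usable as a black box here; one likely has to open its proof and isolate a parameterised or product-form statement — informally, that every planar graph is nonrepetitively colourable by a palette of the form $A\times[3]$ with $|A|\le 256$, or equivalently by a nonrepetitive colouring of a strong product $H\boxtimes P\boxtimes K_3$ with $H$ of bounded treewidth. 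The natural extension to Euler genus~$g$ replaces the factor $K_3$ by $K_{\max\{2g,3\}}$, producing exactly the claimed bound. I expect \cref{GenusPi} to be derived not from the black-box statement of \cref{PlanarPi} but from this strengthened intermediate result that the paper establishes en route.
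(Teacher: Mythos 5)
Your opening plan---cut $\Sigma$ along $2g$ noncontractible curves, colour the planar remainder by \cref{PlanarPi}, and extend to the cut subgraphs---does not constitute a proof, and you rightly identify the fatal obstacle yourself: a path in $G$ can oscillate arbitrarily between the planar part $G_0$ and the cut subgraphs $H_i$, and there is no way to combine a nonrepetitive colouring of $G_0$ with separate nonrepetitive colourings of the $H_i$'s so that these oscillating paths are handled. Nonrepetitive colourings do not compose over vertex cuts; this is precisely why the problem was open for so long. (Incidentally, the remark that the form of the bound ``rules out a naive additive combination'' has the logic backwards---an additive bound $768+O(g)$ would be \emph{better} than $512g$ if it could be achieved; the real issue is that the additive combination simply fails, not that it would be too weak.) So the first half of the proposal is a correctly abandoned dead end, not a proof.

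However, your concluding inference is exactly right, and it is the essential content of the paper's argument. \cref{GenusPi} is not derived from \cref{PlanarPi} as a black box. Rather, the paper invokes the product structure theorem for bounded Euler genus due to Dujmovi\'c et al.\ (\cref{GenusProduct}): every graph $G$ of Euler genus $g$ is a subgraph of $H\boxtimes P\boxtimes K_{\max\{2g,3\}}$ with $\operatorname{tw}(H)\le 3$ and $P$ a path. The bound then follows by the same chain as in the planar case, using strongly nonrepetitive colourings: $\pi(G)\le\pi^*(G)\le \max\{2g,3\}\cdot\pi^*(H\boxtimes P)\le \max\{2g,3\}\cdot 4\cdot\pi^*(H)\le \max\{2g,3\}\cdot 4\cdot 4^3 = 256\max\{2g,3\}$, via \cref{BlowupPi*}, \cref{ProductPi*}, and \cref{TreewidthPi*}. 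So you correctly reverse-engineered the shape of the proof from the shape of the bound; what your proposal lacks is only the formal statement of \cref{GenusProduct} and the explicit chain of product inequalities.
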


\begin{theorem}
\label{MinorPi}
For every graph\/ $X$, there is an integer\/ $k$ such that every\/ $X$-minor-free graph\/ $G$ satisfies\/ $\pi(G) \leq k$. 
\end{theorem}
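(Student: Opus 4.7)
The plan is to deduce \cref{MinorPi} from \cref{GenusPi} via the Graph Minor Structure Theorem of Robertson and Seymour. That theorem asserts that for every graph $X$ there is a constant $k=k(X)$ such that every $X$-minor-free graph is obtained by clique-sums along cliques of size at most $k$ of \emph{$k$-almost-embeddable} summands; each summand consists of a subgraph embedded in a surface of Euler genus at most $k$, at most $k$ vortices of pathwidth at most $k$ attached along faces, and at most $k$ apex vertices that may be adjacent arbitrarily to the rest of the summand.

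First I would nonrepetitively colour a single almost-embeddable summand $G_i$ with a bounded palette. After deleting the apex set $A_i$, what remains is a bounded-genus graph with a bounded number of bounded-pathwidth vortices attached along faces; this can be nonrepetitively coloured using a bounded palette $P_0$ by combining \cref{GenusPi} for the genus skeleton with the $4^k$-bound for bounded-treewidth graphs already cited in the introduction (with a careful argument to handle the attachment of vortices along face boundaries). Each apex is then assigned a private colour from a disjoint palette $P_A$ of size $k$. Any repetitively coloured path in $G_i$ either avoids $A_i$, contradicting nonrepetitiveness in $G_i - A_i$, or uses apex colours, whose uniqueness on $A_i$ forces a corresponding repetition inside $G_i - A_i$; either way we obtain a contradiction.

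The second step is to combine the local colourings across clique-sums. I would root the decomposition tree $T$ and process the bags top-down, coordinating colours along each separator clique of size at most $k$ via a product construction that records, on each vertex, a bounded-size tag describing the colour pattern on its parent separator. A globally repetitive path then either stays inside one bag (contradicting its local nonrepetitiveness) or crosses separators whose matching colour tags force both halves of the repetition to project into a single summand, again a contradiction. The main obstacle will be the interaction of apex vertices with clique-sums: apex vertices can lie on separator cliques, may be adjacent to arbitrarily many vertices, and thereby create long paths that jump across far parts of a summand. Handling this requires designing the apex palette $P_A$ jointly with the separator tagging so that apex colours are reused across bags only in a manner compatible with both the local nonrepetitive colourings and the separator coordination. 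Once this is arranged, the total colour count depends only on $k(X)$, yielding \cref{MinorPi}.
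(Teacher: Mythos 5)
Your proposal follows the same high-level blueprint as the paper---graph minor structure theorem, colour each almost-embeddable piece, combine across separators---but at both critical steps you gesture at an argument rather than giving one, and both of those gestures hide genuine difficulties that the paper resolves with specific machinery you have not supplied.

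For the almost-embeddable pieces, you propose ``combining \cref{GenusPi} for the genus skeleton with the $4^k$-bound for bounded-treewidth graphs ... (with a careful argument to handle the attachment of vortices along face boundaries).'' That careful argument is the entire problem: a nonrepetitive colouring of the genus part and a nonrepetitive colouring of the vortices do not simply overlay, because a repetitive path can weave freely between the two. The paper sidesteps this by invoking a product structure theorem (\cref{AlmostEmbeddableStructure}) which says every $k$-almost-embeddable graph is a subgraph of $K_k + (H\boxtimes P\boxtimes K_{\max\{6k,1\}})$ with $H$ of bounded treewidth---vortices and genus are absorbed jointly into the strong-product form. More fundamentally, your argument is written in terms of plain nonrepetitive colourings $\pi$, but ordinary nonrepetitiveness does not propagate through the strong product with a path: a path in $G\boxtimes P$ projects onto a \emph{lazy walk} in $G$, not a path, so a nonrepetitive colouring of $G$ gives no control over it. This is exactly why the paper introduces \emph{strongly} nonrepetitive colourings $\pi^*$ (\cref{StronglyNonrepetitive}), which are closed under the operations $G\mapsto G\boxtimes P$ and $G\mapsto G\boxtimes K_\ell$ (\cref{lem:strp,ProductPi*,BlowupPi*}) and satisfy $\pi^*(G)\le 4^{\mathrm{tw}(G)}$ (\cref{TreewidthPi*}). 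Without this strengthening your product/tag constructions simply do not compose.

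For the clique-sum step, you describe a top-down ``product construction that records ... a bounded-size tag describing the colour pattern on its parent separator,'' and then concede that the interaction with apex vertices is an unresolved ``main obstacle.'' The paper instead uses \cref{ProduceRichDecomp} to get an $r$-rich tree-decomposition (the separators remain cliques of bounded size, a property one cannot take for granted from the raw structure theorem) and then applies \cref{RichColour}, a black-box lemma of Dujmovi\'c, Morin and Wood which converts a per-bag nonrepetitive $c$-colouring into a global $c\cdot 4^r$-colouring. Apex vertices are handled not with private colours per summand but via the clean identity $\pi^*(G+K_k)=\pi^*(G)+k$ applied to the join form coming from \cref{AlmostEmbeddableStructure}. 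So the paper's proof is essentially a short chain of reductions, whereas your proposal names the correct structural inputs but leaves the two load-bearing steps (vortices and clique-sum coordination) unproved, and lacks the key definitional ingredient ($\pi^*$) that makes the reductions go through.
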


\begin{theorem}
\label{TopoMinorPi}
For every graph\/ $X$, there is an integer\/ $k$ such that every\/ $X$-topological-minor-free graph\/ $G$ satisfies\/ $\pi(G) \leq k$. 
\end{theorem}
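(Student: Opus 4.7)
The plan is to bootstrap from \cref{MinorPi} using a structure theorem for graphs excluding a fixed topological minor. The relevant tool is the Grohe--Marx structure theorem, which asserts that for every graph $X$ there are integers $g,p,\Delta$ depending only on $X$ such that every $X$-topological-minor-free graph $G$ admits a tree decomposition in which each torso is of one of two types: either (i) it is $(g,p)$-almost-embeddable in a surface of Euler genus at most $g$ after deleting at most $p$ apex vertices, exactly as in the Robertson--Seymour decomposition used to prove \cref{MinorPi}, or (ii) it contains at most $p$ vertices of degree greater than $\Delta$.

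Type-(i) torsos are automatically $Y$-minor-free for some fixed graph $Y=Y(g,p)$, so \cref{MinorPi} already supplies a nonrepetitive colouring of them with a number of colours depending only on $X$.

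For type-(ii) torsos I would argue directly. Let $A$ be the set of at most $p$ high-degree vertices of the torso; the torso minus $A$ then has maximum degree at most $\Delta$, and so admits a nonrepetitive colouring in $O(\Delta^2)$ colours by the result of \citet{AGHR02} quoted in the introduction. Each vertex of $A$ is then assigned its own private colour drawn from a disjoint palette of $p$ extra colours. If a path $v_1,\dots,v_{2t}$ were repetitively coloured under the combined palette, no $v_i$ could lie in $A$ (since distinct vertices of $A$ receive distinct private colours, and the two halves must match pointwise), so the path would lie entirely in the torso minus $A$, contradicting nonrepetitiveness of the bounded-degree subcolouring. This gives a nonrepetitive colouring of the type-(ii) torso using $O(\Delta^2)+p$ colours.

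It remains to glue the local torso colourings into a global nonrepetitive colouring of $G$ along the tree decomposition. This is precisely the combinatorial core of the proof of \cref{MinorPi}, presumably a product construction that combines a nonrepetitive colouring of the decomposition tree with an appropriate shadow-complete layering of each torso. The main obstacle I anticipate is verifying that type-(ii) torsos admit the structural features (for example, a layering of bounded width) that this gluing machinery requires: for almost-embeddable torsos the needed layering comes from embedding in a surface, while for bounded-degree-plus-apex torsos one must supply it from scratch, perhaps by taking a BFS layering and exploiting the degree bound. Once this compatibility is in place, the same assembly procedure used for \cref{MinorPi} should deliver a constant $k=k(X)$ as required.
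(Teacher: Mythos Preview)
Your outline is essentially the paper's argument, and your treatment of the two torso types is right: almost-embeddable pieces are handled as in \cref{MinorPi} (the paper uses the explicit bound~\eqref{AlmostEmbeddablePi} rather than invoking \cref{MinorPi} as a black box, but that is cosmetic), and for the bounded-degree-plus-apex pieces the paper does exactly what you propose, citing \cref{NonRepColourDegree} and adding $k$ private colours for the high-degree vertices.

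Where you go astray is in the gluing step. You speculate that the assembly across the tree-decomposition needs layerings of the torsos, and you worry that type-(ii) torsos may not supply one. This is a misconception. The gluing lemma actually used (\cref{RichColour}, from \citep{DMW17}) requires only two things: that each bag be nonrepetitively $c$-colourable for some fixed $c$, and that the tree-decomposition be \emph{$r$-rich}, meaning every adhesion set is a clique of size at most $r$. No layering of the torsos is needed; the layering and shadow-completeness machinery you have in mind lives inside the proof of \cref{TreewidthPi*}, not in the gluing across bags. The $r$-richness is supplied by the version of the Grohe--Marx theorem stated as \cref{TopoProduceRichDecomp}, which is already phrased so that adhesion sets are cliques (at the cost of passing to a supergraph). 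Once you quote that, \cref{RichColour} immediately gives $\pi(G)\le c\cdot 4^r$ and you are done; the obstacle you anticipated does not exist.
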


The proofs of \cref{PlanarPi,GenusPi} are given in \cref{PlanarGenus}, and 
the proofs of \cref{MinorPi,TopoMinorPi} are given in \cref{Minors}.
Before that in \cref{tools} we introduce the tools used in our proofs, namely 
so-called strongly nonrepetitive colourings, tree-decompositions and treewidth, and strong products.
With these tools in hand, the above theorems quickly follow from recent results of \citet{DJMMUW} that show that planar graphs and other graph classes are subgraphs of certain strong products. 

\section{Tools}
\label{tools}

Undefined terms and notation can be found in \citep{Diestel4}.

\subsection{Strongly Nonrepetitive Colourings}
\label{StronglyNonrepetitive}

A key to all our proofs is to consider a strengthening of nonrepetitive colouring defined below. 

For a graph $G$, a \emph{lazy walk} in $G$ is a sequence of vertices
$v_1,\ldots,v_k$ such that for each $i\in\{1,\dots,k\}$, either 
$v_iv_{i+1}$ is an edge of $G$, or $v_i=v_{i+1}$. A lazy walk can be thought of as 
a walk in the graph obtained from $G$ by adding a loop at each vertex. 
For a colouring $\phi$ of $G$, a lazy walk $v_1,\ldots,v_{2k}$ is \emph{$\phi$-repetitive} if $\phi(v_i)=\phi(v_{i+k})$ for each $i\in\{1,\dots,k\}$.

A colouring $\phi$ is \emph{strongly nonrepetitive} if for every $\phi$-repetitive lazy walk $v_1,\ldots,v_{2k}$, there exists $i\in\{1,\dots,k\}$ such that $v_i=v_{i+k}$. Let $\pi^*(G)$ be the minimum number of colours in a strongly
nonrepetitive colouring of $G$. 
Since a path has no repeated vertices, every strongly nonrepetitive colouring is nonrepetitive, and thus
$\pi(G)\le \pi^*(G)$ for every graph $G$.

\subsection{Layerings}
\label{Layerings}

A \emph{layering} of a graph $G$ is a partition $(V_0,V_1,\dots)$ of $V(G)$ such that for every edge $vw\in E(G)$, if $v\in V_i$ and $w\in V_j$, then $|i-j| \leq 1$. If $r$ is a vertex in a connected graph $G$ and $V_i$ is the set of vertices at distance exactly $i$ from $r$ in $G$ for all $i\geq 0$, then the layering $(V_0,V_1,\dots)$ is called a \emph{BFS layering} of $G$.

Consider a layering $(V_0,V_1,\dots)$ of a graph $G$. Let $H$ be a
connected component of $G[V_i\cup V_{i+1}\cup \cdots]$, for some $i\ge
1$. The \emph{shadow} of  $H$ is the set of vertices in $V_{i-1}$
adjacent to  some vertex in $H$. The layering is
\emph{shadow-complete} if every shadow is a clique. This concept was
introduced by \citet{KP-DM08} and implicitly by \citet{DMW05}.

\smallskip

We will need the following result.

\begin{lemma}[\citep{KP-DM08,DMW05}]
\label{cla:sc}
Every BFS-layering of a connected chordal graph is shadow-complete.
\end{lemma}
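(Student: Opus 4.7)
The plan is to argue by contradiction: suppose $u,w$ are two vertices in the shadow of some component $H$ of $G[V_i\cup V_{i+1}\cup\cdots]$ with $uw\notin E(G)$, and exhibit an induced cycle of length at least four, contradicting the chordality of $G$.

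First I would construct two $u$-$w$ paths living in vertically disjoint strips of the BFS layering. On the ``upper'' side, let $P$ be a shortest $u$-$w$ path in the induced subgraph $G[\{u,w\}\cup V(H)]$; such a path exists because $u$ and $w$ both have neighbours in $H$ (by definition of shadow) and $H$ is connected, its internal vertices lie in $V_i\cup V_{i+1}\cup\cdots$, and its length is at least $2$ since $uw\notin E(G)$. On the ``lower'' side, let $Q$ be a shortest $u$-$w$ path in $G[\{u,w\}\cup V_0\cup\cdots\cup V_{i-2}]$; this path exists because a BFS tree rooted at $r$ provides $u$-$r$ and $w$-$r$ branches of length $i-1$ meeting at some common ancestor, giving a $u$-$w$ walk whose internal vertices all lie in $V_0\cup\cdots\cup V_{i-2}$. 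Again the length of $Q$ is at least $2$.

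Next I would observe that the interiors of $P$ and $Q$ lie in the disjoint vertex sets $V_i\cup V_{i+1}\cup\cdots$ and $V_0\cup\cdots\cup V_{i-2}$, so $P$ and $Q$ meet only at $u$ and $w$, and therefore $C:=P\cup Q$ is a cycle of length at least $4$. The key step is to rule out chords. Since a shortest path inside an induced subgraph is automatically an induced path of $G$, neither $P$ nor $Q$ has a chord. Any potential chord connecting an internal vertex of $P$ to an internal vertex of $Q$ would be an edge between $V_i\cup V_{i+1}\cup\cdots$ and $V_0\cup\cdots\cup V_{i-2}$, whose layer indices differ by at least two, and this is forbidden by the definition of a layering. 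Finally, the chord $uw$ is excluded by hypothesis. Thus $C$ is an induced cycle of length at least $4$ in $G$, contradicting chordality.

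The main obstacle is exactly the chord-avoidance step: the argument collapses unless the two paths can be forced into vertex sets separated by at least two layers, so that the layering property kills all cross-chords at once. The BFS hypothesis is used precisely here --- it is what allows us to replace the ``obvious'' $u$-$w$ path through $V_{i-1}$ by one that climbs strictly into lower layers via the lowest common ancestor, keeping $V_{i-1}$ clear except at the endpoints. Ordinary (non-BFS) layerings would not provide this, which is why the hypothesis singles out BFS layerings.
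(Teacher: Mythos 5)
Your proof is correct. The paper states this lemma citing \citep{KP-DM08,DMW05} and does not reproduce a proof, but the argument you give --- build an induced cycle from a shortest $u$--$w$ path $P$ through the component $H$ in layers $\geq i$, a shortest $u$--$w$ path $Q$ through layers $\leq i-2$ (obtained via the BFS tree), observe that cross-chords between the two strips are forbidden because a layering has no edges between layers differing by two or more, and that $uw$ itself is not an edge by assumption --- is precisely the standard argument for this fact and matches the proofs in the cited sources. The only thing left implicit is that the case $i=1$ is vacuous (the shadow then lies in $V_0=\{r\}$, a single vertex), which is needed for $V_{i-2}$ to make sense; it is worth a sentence, but it is a trivial omission.
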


\subsection{Treewidth}
\label{Treewidth}

A \emph{tree-decomposition} of a graph $G$ consists of a collection $\{B_x\subseteq V(G) : x\in V(T)\}$ of subsets of $V(G)$, called \emph{bags}, indexed by the vertices of a tree $T$, and with the following properties:
\begin{compactitem}
\item for every vertex $v$ of $G$, the set $\{x\in V(T) : v\in B_x\}$ induces a non-empty (connected) subtree of $T$, and
\item for every edge $vw$ of $G$, there is a vertex $x\in V(T)$ for which $v,w\in B_x$.
\end{compactitem}
The \emph{width} of such a tree-decomposition is $\max\{|B_x|:x\in V(T)\}-1$. 
The \emph{treewidth} of a graph $G$ is the minimum width of a tree-decomposition of $G$.
Tree-decompositions were introduced by \citet{RS-II}.
Treewidth measures how similar a given graph is to a tree, and is particularly important in structural and algorithmic graph theory.

\citet{BV-NonRepVertex07} and \citet{KP-DM08} independently proved that graphs of bounded treewidth have bounded nonrepetitive chromatic number. Specifically, \citet{KP-DM08} proved that every graph with treewidth $k$ is  nonrepetitively $4^k$-colourable, which is the best known bound. \cref{TreewidthPi*} below strengthens this result. The proof is almost identical to that of \citet{KP-DM08} and depends on the following lemma. A lazy walk $v_1,\ldots,v_{2k}$ is \emph{boring} if $v_i=v_{i+k}$ for each $i\in\{1,\dots,k\}$. 

\begin{lemma}[\citep{KP-DM08}]
\label{cla:path}
Every path\/ $P$ has a\/ $4$-colouring\/ $\phi$ such that every\/ $\phi$-repetitive lazy walk is boring.
\end{lemma}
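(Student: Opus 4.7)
The idea is to build a $4$-colouring $\phi$ of $P$ that packages together two ingredients: a bit recording the parity of each vertex's position along $P$, and a Thue-type nonrepetitive sequence on the vertices. The parity bit forces the ``lazy shape'' of the two halves of a $\phi$-repetitive walk to match, while the nonrepetitive component forces the underlying walks themselves to coincide.

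I would fix a labelling $V(P) = \{1, 2, \dots, n\}$ along the path and choose $\phi : V(P) \to \{1,2,3,4\}$ encoding (i) the parity of $i$, so that $i \bmod 2$ can be read off from $\phi(i)$, and (ii) a Thue-type nonrepetitive value obtained from an infinite square-free sequence. Packing both pieces of data into four colours (rather than the naive six from a ternary Thue sequence times parity) requires some care, exploiting that a proper coloring already forbids certain adjacent pairs.

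Given a $\phi$-repetitive lazy walk $v_1, \dots, v_{2k}$, I would proceed in three steps. \emph{Step 1 (parity alignment).} The parity component of $\phi$ yields $v_i \equiv v_{i+k} \pmod 2$ for every $i \in \{1, \dots, k\}$. Writing $d_i := v_{i+1} - v_i \in \{-1, 0, 1\}$ and subtracting consecutive congruences gives $d_i \equiv d_{i+k} \pmod 2$ for $i \in \{1, \dots, k-1\}$. Hence step $i$ is lazy ($d_i = 0$) in the first half if and only if step $i+k$ is lazy in the second half. \emph{Step 2 (compression).} The matched lazy patterns let us compress both halves by deleting lazy steps, yielding two non-lazy walks $w_0, w_1, \dots, w_m$ and $w_0', w_1', \dots, w_m'$ on $P$ of the same length $m+1$, starting at $w_0 = v_1$ and $w_0' = v_{k+1}$ respectively. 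The $\phi$-repetition descends to $\phi(w_j) = \phi(w_j')$ for every $j$. \emph{Step 3 (walk coincidence).} Use the Thue-type nonrepetitive property of $\phi$ to argue that the two compressed walks must coincide, i.e.\ $w_j = w_j'$ for all $j$. Combined with the matched lazy patterns from Step~1, this yields $v_i = v_{i+k}$ for all $i$, so the walk is boring.

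The main obstacle is Step 3: the compressed walks are not contiguous substrings of~$P$ but walks that may reverse direction, so Thue's classical ``no square substring'' property does not apply directly to the colour sequence traced out. The resolution is a case analysis on how the two walks compare (same starting vertex or not, same or opposite first step, location of first disagreement) using the parity alignment to control the geometry, and using the Thue component of $\phi$ in each case to either propagate agreement or derive a contradiction from an induced square in the base colouring. Having four colours, as opposed to three, gives precisely the extra room needed to carry both the parity bit and a strong enough nonrepetitive structure to close every case.
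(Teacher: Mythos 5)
Your plan has a structural obstruction at the very first step: a $4$-colouring $\phi$ of a (long enough) path cannot encode the parity of vertex positions, so ingredient~(i) of your construction is impossible. Any $\phi$ for which the lemma holds must satisfy $\phi(x-1)\neq\phi(x+1)$ for every interior vertex $x$, since otherwise the lazy walk $x,x-1,x,x,x+1,x$ (here $k=3$) is $\phi$-repetitive but not boring. If in addition $i\bmod 2$ were readable from $\phi(i)$, the odd vertices would receive colours from a set $A$ and the even vertices from a disjoint set $B$; the constraint $\phi(i)\neq\phi(i+2)$ forces $|A|,|B|\geq 2$, hence $|A|=|B|=2$, hence within each parity class $\phi$ alternates between two colours, so $\phi$ is $4$-periodic. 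But then $1,2,\ldots,8$ is a $\phi$-repetitive lazy walk (with $k=4$) that is not boring, a contradiction. The same observation kills the ``naive'' colouring you gravitate towards (ternary square-free word on one parity, a single fourth colour on the other): there $\phi(2)=\phi(4)=4$, so the lazy walk $3,2,3,3,4,3$ is $\phi$-repetitive and not boring, and its two compressed halves $(3,2,3)$ and $(3,4,3)$ are distinct non-lazy walks with identical colour sequences meeting at the common vertex $3$ --- a concrete counterexample to your Step~3.

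Beyond this, Step~3 is a plan rather than a proof: you flag the case analysis as the main obstacle and leave it open, and once the parity encoding is dropped (as it must be), the ``parity alignment'' of Step~1 and the matched compression of Step~2 no longer come for free. The K{\"u}ndgen--Pelsmajer colouring behind this lemma is a single $4$-letter word that is simultaneously square-free and satisfies $\phi(i)\neq\phi(i+2)$ (both properties are forced, as shown above), rather than a parity bit paired with a ternary Thue word; producing such a word and then arguing directly that it tames all lazy walks is the real content of the argument, and it is not of the ``product of two ingredients'' shape you propose.
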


\begin{theorem}
\label{TreewidthPi*}
For every graph\/ $G$ of treewidth at most\/ $k\ge 0$, we have\/ $\pi^*(G)\le 4^k$.
\end{theorem}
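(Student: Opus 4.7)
The plan is to follow the Kündgen--Pelsmajer argument for the original inequality $\pi(G) \leq 4^k$, upgrading it to the strong version by using \cref{cla:path} in place of Thue's classical theorem. The proof is by induction on $k$, after three routine reductions: since $\pi^*$ is monotone under subgraphs and every graph of treewidth at most $k$ embeds as a spanning subgraph of a chordal graph with clique number at most $k+1$, I may assume $G$ is chordal with $\omega(G) \leq k+1$; since lazy walks stay within a single connected component, I may further assume $G$ is connected; and the base case $k=0$ is immediate, as $G$ is edgeless and one colour trivially works (any $\phi$-repetitive lazy walk is then the constant vertex).

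For the inductive step, I would fix a root and let $(V_0, V_1, \ldots)$ be the resulting BFS layering, which is shadow-complete by \cref{cla:sc}. The colouring is $\phi(v) = (\phi_1(v), \phi_2(v))$ using $4 \cdot 4^{k-1} = 4^k$ colours. For the first coordinate, let $\psi$ be the 4-colouring of the half-infinite path $0, 1, 2, \ldots$ furnished by \cref{cla:path}, and set $\phi_1(v) = \psi(L(v))$, where $L(v)$ is the index of the layer containing $v$. For the second coordinate, I recurse on the shadow decomposition: each connected component $H$ of $G[V_i \cup V_{i+1} \cup \cdots]$ has a shadow $S_H \subseteq V_{i-1}$ that is a clique of size at most $k$ (by shadow-completeness together with chordality and the clique-number bound), so an inductive $4^{k-1}$-colouring of $H$ exists that agrees with the colours already assigned to $S_H$. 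The subcolourings stitch consistently along the shadow tree because shadows are cliques on which any proper colouring is injective.

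To verify strong nonrepetitiveness, consider a $\phi$-repetitive lazy walk $v_1, \ldots, v_{2m}$. The sequence $L(v_1), \ldots, L(v_{2m})$ is itself a lazy walk on the half-infinite path and is $\psi$-repetitive, so \cref{cla:path} forces $L(v_i) = L(v_{i+m})$ for every $i$. With matched layer sequences in the two halves, the walk factors into phases in which each half sits inside the same shadow-bounded subcomponent; the inductive guarantee on $\phi_2$ then yields $v_i = v_{i+m}$ for every $i$, as required.

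I expect the main obstacle to be the careful definition of $\phi_2$: the subcolourings for different components must interlock consistently along the tree of shadow decompositions, and one must verify that the inductive strongly nonrepetitive guarantee transfers cleanly from each component back to $G$ as a whole. The reason the strong strengthening works is that \cref{cla:path} gives \emph{exact} equality of the layer sequences of the two halves of a $\phi$-repetitive lazy walk, not merely nonrepetitiveness; this pointwise layer matching is precisely what allows an inductive descent into the shadow decomposition to conclude that the two halves coincide as vertex sequences, rather than merely as colour sequences.
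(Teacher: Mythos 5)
Your high-level plan matches the paper's: induct on $k$, reduce to a connected chordal graph with $\omega\le k+1$, take a BFS layering, pair a $4$-colouring of the layer path (via \cref{cla:path}) with a $4^{k-1}$-colouring obtained by induction, and exploit the fact that \cref{cla:path} forces \emph{exact} layer matching $L(v_i)=L(v_{i+m})$ for all $i$ rather than mere nonrepetitiveness. That last observation is indeed the crux.

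However, your construction of the second coordinate has a genuine gap. You propose to recurse on connected components $H$ of $G[V_i\cup V_{i+1}\cup\cdots]$, asking for a $4^{k-1}$-colouring of each $H$ that ``agrees with the colours already assigned to $S_H$''. This does not close the induction: such a component $H$ can have treewidth $k$, the same as $G$, so the inductive hypothesis yields $4^k$ colours, not $4^{k-1}$. The agreement constraint is also underspecified, since $S_H\subseteq V_{i-1}$ is disjoint from $H$. The paper's construction is simpler and avoids this: it colours, by a single application of the inductive hypothesis, the spanning subgraph $H$ of $G$ consisting of all intra-layer edges. This $H$ has treewidth at most $k-1$ because each $G[V_i]$ does (for $i\ge 1$, $G[V_i]$ plus a universal vertex is a minor of $G$, obtained by contracting $V_0\cup\cdots\cup V_{i-1}$; deleting a universal vertex drops treewidth by exactly one). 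No shadow bookkeeping enters the construction at all. Shadow-completeness (\cref{cla:sc}) is used only in the verification: given a $\phi$-repetitive lazy walk $W$, delete all vertices at depth greater than the minimum depth $d$ in $W$; shadow-completeness ensures the surviving subsequence $W'$ is still a lazy walk, now inside $G[V_d]\subseteq H$, and then strong nonrepetitiveness of the inductive colouring yields $v_j=v_{j+m}$ for \emph{some} $j$. (Your last paragraph claims ``for every $i$''; strong nonrepetitiveness only gives, and only needs, some $i$.)
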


\begin{proof}
The proof proceeds by induction on $k$. If $k=0$, then $G$ has
no edges, so assigning the same colour to all the vertices gives a strongly nonrepetitive colouring.
For the rest of the proof, assume that $k\ge 1$. 
Consider a tree-decomposition of $G$ of width at most $k$. 
By adding edges if necessary, we may assume that every bag of the tree-decomposition is a clique. 
Thus, $G$ is connected and chordal, with clique-number at most $k+1$.

Let $(V_0,V_1,\ldots)$ be a BFS-layering of $G$.
We refer to $V_i$ as the set of vertices at \emph{depth} $i$.
Note that the subgraph $G[V_i]$ of $G$ induced by each layer $V_i$ has
treewidth at most $k-1$.\footnote{This is clear for $i=0$ (since $k\geq 1$), 
and for $i\geq 1$ this follows from the fact that the graph $G[V_i]$ plus a universal vertex is a minor of $G$ (contract $V_0 \cup \cdots \cup V_{i-1}$ into a single vertex and remove $V_{i+1}, V_{i+2}, \dots$), and thus has treewidth at most $k$. Since removing a universal vertex decreases the treewidth by exactly one, 
it follows that $G[V_i]$ has treewidth at most $k-1$.} 
Thus the spanning subgraph $H$ of $G$
induced by all edges whose endpoints have the same depth also has
treewidth at most $k-1$. By the induction hypothesis, $H$ has a strongly
nonrepetitive colouring $\phi_1$
with $4^{k-1}$ colours. The graph $P$ obtained from $G$ by contracting
each set $V_i$ (which might not induce a connected graph) into a
single vertex $x_i$ is a path, and thus, by \cref{cla:path}, $P$
has a $4$-colouring $\phi_2$ such that every
$\phi_2$-repetitive walk is boring. For each $i\geq 0$ and each vertex $u\in V_i$,
set $\phi(u):=(\phi_1(u), \phi_2(x_i))$. The colouring $\phi$ of $G$ uses at most
$4\cdot 4^{k-1}=4^k$ colours. 

We now prove that $\phi$ is strongly nonrepetitive. 
Let $W$ be a $\phi$-repetitive lazy walk $v_1,\ldots,v_{2k}$.
Our goal is to prove that $v_j=v_{j+k}$ for some $j\in\{1,\ldots,k\}$. 
Let $d$ be the minimum depth of a vertex in $W$.

Let $W'$ be the sequence of vertices obtained from $W$ by removing all vertices at depth greater than $d$. 
We claim that $W'$ is a lazy walk. To see this, consider vertices $v_i,v_{i+1}, \ldots, v_{i+t}$ of $W$ such that $v_i$ and $v_{i+t}$ have depth
$d$ but $v_{i+1}, \ldots, v_{i+t-1}$ all have depth greater than $d$; thus, $v_{i+1}, \ldots, v_{i+t-1}$ were removed when constructing $W'$.
Then, the vertices $v_{i+1}, \ldots, v_{i+t-1}$ lie
in a connected component of the graph induced by the vertices of depth
greater than $d$, thus it follows that $v_i$
and $v_{i+t}$ are adjacent or equal by \cref{cla:sc}.  
This shows that $W'$ is a lazy walk in $G[V_d]$.

The projection of $W$ on $P$ is a $\phi_2$-repetitive lazy walk in $P$, and is thus boring by
\cref{cla:path}. It follows that the vertices $v_j$
and $v_{j+k}$ of $W$ have the same depth for every $j \in \{1, \dots, k\}$. 
In particular, $v_j$ was removed from $W'$ if and only if $v_{j+k}$ was. 
Hence, there are indices $1 \leq i_1 < i_2 < \cdots < i_{\ell} \leq k$ 
such that $W'=v_{i_1}, v_{i_2}, \dots, v_{i_{\ell}}, v_{i_1+k}, v_{i_2+k}, \dots, v_{i_{\ell}+k}$. 
Since $W$ is $(\phi_1,\phi_2)$-repetitive, it follows that $W'$ is
also $(\phi_1,\phi_2)$-repetitive and in particular $W'$ is $\phi_1$-repetitive. 
By the definition of $\phi_1$, there is an index $i_r$ such that $v_{i_r}=v_{i_r+k}$, which completes the proof.
\end{proof}

\subsection{Strong Products}
\label{Products}

The \emph{strong product} of graphs $A$ and $B$, denoted by $A\boxtimes B$, is the graph with vertex set $V(A)\times V(B)$, where distinct vertices $(v,x),(w,y)\in V(A)\times V(B)$ are adjacent if 
(1) $v=w$ and $xy\in E(B)$, or 
(2) $x=y$ and $vw\in E(A)$, or  
(3) $vw\in E(A)$ and $xy\in E(B)$. 
Nonrepetitive colourings of graph products have been studied in \citep{KP-DM08,KPZ14,BaratWood-EJC08,PSFT18}. Indeed, \citet{KP-DM08} note that their method shows that the strong product of $k$ paths is nonrepetitively $4^k$-colourable, which is a precursor to the following results.

\begin{lemma}\label{lem:strp}
  Let\/ $H$ be a graph with an\/ $\ell$-colouring\/ $\phi_2$ such that every\/
  $\phi_2$-repetitive lazy walk is boring. 
  For every graph\/ $G$, we have\/ $\pi^*(G \boxtimes H)\le \ell \,\pi^*(G)$.
\end{lemma}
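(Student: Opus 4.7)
The plan is to use the product colouring $\phi(v,x):=(\phi_1(v),\phi_2(x))$, where $\phi_1$ is a strongly nonrepetitive colouring of $G$ using $\pi^*(G)$ colours and $\phi_2$ is the given $\ell$-colouring of $H$. This colouring uses at most $\ell\,\pi^*(G)$ colours, so it remains to verify that $\phi$ is strongly nonrepetitive.

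First I would observe that a lazy walk $(v_1,x_1),\ldots,(v_{2k},x_{2k})$ in $G\boxtimes H$ projects to a lazy walk on each factor. Indeed, by the definition of the strong product, consecutive pairs $(v_i,x_i)$ and $(v_{i+1},x_{i+1})$ are either equal or adjacent in $G\boxtimes H$, and inspecting the three types of adjacencies (together with equality) shows that in each case one has $v_iv_{i+1}\in E(G)$ or $v_i=v_{i+1}$, and similarly $x_ix_{i+1}\in E(H)$ or $x_i=x_{i+1}$. This case check is the only routine piece; it is the ``main obstacle'' only in the sense that the whole argument hinges on it.

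Next, suppose the lazy walk is $\phi$-repetitive. Then matching the second coordinates of the colours gives $\phi_2(x_i)=\phi_2(x_{i+k})$ for each $i\in\{1,\dots,k\}$, so the projection $x_1,\ldots,x_{2k}$ is a $\phi_2$-repetitive lazy walk in $H$. By the hypothesis on $\phi_2$, this walk is boring, meaning $x_i=x_{i+k}$ for every $i\in\{1,\dots,k\}$. Similarly, matching the first coordinates shows that the projection $v_1,\ldots,v_{2k}$ is a $\phi_1$-repetitive lazy walk in $G$.

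Finally, since $\phi_1$ is strongly nonrepetitive, there exists $i\in\{1,\dots,k\}$ with $v_i=v_{i+k}$. Combining with $x_i=x_{i+k}$ from the previous step yields $(v_i,x_i)=(v_{i+k},x_{i+k})$, which is exactly what strong nonrepetitiveness of $\phi$ on $G\boxtimes H$ requires. Hence $\pi^*(G\boxtimes H)\le \ell\,\pi^*(G)$.
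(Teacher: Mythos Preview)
Your proof is correct and follows essentially the same approach as the paper: define the product colouring $\phi=(\phi_1,\phi_2)$, project a $\phi$-repetitive lazy walk to each factor, use the hypothesis on $\phi_2$ to get $x_i=x_{i+k}$ for all $i$, and use strong nonrepetitiveness of $\phi_1$ to find an index with $v_i=v_{i+k}$. The only difference is that you spell out the case check for why projections are lazy walks, which the paper leaves implicit.
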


\begin{proof}
Consider a strongly nonrepetitive colouring $\phi_1$ of $G$ with $\pi^*(G)$
colours.  For any two vertices $u \in V(G)$ and $v \in V(H)$, we define the colour $\phi(u,v)$
of the vertex $(u,v)\in V(G\boxtimes H)$ by $\phi(u,v) :=(\phi_1(u),\phi_2(v))$.
We claim that this is a strongly nonrepetitive colouring of
$G\boxtimes H$. To
see this, consider a $\phi$-repetitive lazy walk $W=(u_1,v_1), \ldots, 
(u_{2k},v_{2k})$ in $G\boxtimes H$. By the definition of the strong
product and the definition of $\phi$, the projection
$W_G=u_1,u_2,\ldots,u_{2k}$ of $W$ on $G$ is a $\phi_1$-repetitive 
lazy walk in $G$ and the projection $W_H=v_1,v_2,\ldots,v_{2k}$ of $W$ on $H$ is a
$\phi_2$-repetitive lazy walk in $H$. By the definition of $\phi_1$, there
is an index $i$ such that $u_i=u_{i+k}$. By the definition of $\phi_2$,
we have $v_j=v_{j+k}$ for every $j \in \{1, \dots, k\}$.
In particular, $v_i=v_{i+k}$ and $(u_i,v_i)=(u_{i+k},v_{i+k})$, which
completes the proof.
\end{proof}

Applying \cref{cla:path}, we obtain the following immediate corollary.

\begin{corollary}\label{ProductPi*}
For every graph\/ $G$ and every path\/ $P$, we have\/ $\pi^*(G \boxtimes P)\le 4 \pi^*(G)$.
\end{corollary}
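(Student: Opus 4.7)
The plan is essentially a one-line deduction, since the two ingredients — \cref{cla:path} and \cref{lem:strp} — are already in hand and fit together exactly as required by the hypothesis of the lemma. First, I would invoke \cref{cla:path} with the path $P$ in the role of the graph, obtaining a $4$-colouring $\phi_2$ of $P$ with the property that every $\phi_2$-repetitive lazy walk in $P$ is boring. This is precisely the hypothesis that \cref{lem:strp} places on the second factor of the strong product, with parameter $\ell = 4$.

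Next I would apply \cref{lem:strp} with $H := P$ and the colouring $\phi_2$ just obtained. The conclusion of that lemma is $\pi^*(G \boxtimes H) \le \ell\, \pi^*(G)$, which in this instance reads $\pi^*(G \boxtimes P) \le 4\, \pi^*(G)$, giving the corollary.

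There is no real obstacle: the only thing to check is that \cref{cla:path} and \cref{lem:strp} are compatible, and the condition on $\phi_2$ (``every $\phi_2$-repetitive lazy walk is boring'') is stated identically in both places. No further combinatorial work, no recursion on the structure of $G$, and no properties of paths beyond \cref{cla:path} are required. Accordingly, my write-up would be a two-sentence proof rather than an extended argument.
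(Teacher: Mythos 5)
Your proposal is correct and is exactly the paper's argument: the paper introduces \cref{ProductPi*} with the single sentence ``Applying \cref{cla:path}, we obtain the following immediate corollary,'' which is precisely your deduction of taking $H=P$ and $\ell=4$ in \cref{lem:strp} using the $4$-colouring from \cref{cla:path}.
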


By taking $H=K_\ell$ and a proper $\ell$-colouring $\phi_2$ of $K_\ell$, 
we obtain the following corollary of \cref{lem:strp}.

\begin{corollary}\label{BlowupPi*}
For every graph\/ $G$ and every integer\/ $\ell\geq 1$, we have\/ $\pi^*(G \boxtimes K_{\ell})\le \ell\,\pi^*(G)$.
\end{corollary}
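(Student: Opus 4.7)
The plan is to deduce this directly from \cref{lem:strp} by exhibiting a suitable colouring of $K_\ell$. Set $H=K_\ell$ and let $\phi_2 : V(K_\ell) \to \{1,\dots,\ell\}$ be any proper $\ell$-colouring; since $K_\ell$ has $\ell$ vertices and $\phi_2$ assigns distinct colours to adjacent vertices, $\phi_2$ is in fact a bijection from $V(K_\ell)$ to its colour set.

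Next I would verify that $\phi_2$ meets the hypothesis of \cref{lem:strp}, namely that every $\phi_2$-repetitive lazy walk in $K_\ell$ is boring. Consider such a walk $v_1,\dots,v_{2k}$ in $K_\ell$. By definition, $\phi_2(v_i)=\phi_2(v_{i+k})$ for each $i\in\{1,\dots,k\}$. Since $\phi_2$ is injective, this forces $v_i=v_{i+k}$ for every such $i$, which is exactly the definition of a boring lazy walk.

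Having checked the hypothesis, I would apply \cref{lem:strp} to $G$ and $H=K_\ell$ with the colouring $\phi_2$ above to conclude that $\pi^*(G \boxtimes K_\ell)\le \ell\,\pi^*(G)$, which is precisely the desired inequality.

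There is no real obstacle here: the only substantive observation is that a proper colouring of a complete graph on $\ell$ vertices with $\ell$ colours is necessarily a bijection, after which the condition ``every $\phi_2$-repetitive lazy walk is boring'' is immediate from injectivity. The corollary is essentially a reformulation of \cref{lem:strp} for the special case $H=K_\ell$.
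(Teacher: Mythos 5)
Your proof is correct and follows exactly the paper's route: take $H=K_\ell$ with a proper $\ell$-colouring $\phi_2$ and apply \cref{lem:strp}, noting that injectivity of $\phi_2$ makes every $\phi_2$-repetitive lazy walk boring. The paper states this step without elaboration; you have simply filled in the (easy) verification.
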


\section{Planar Graphs and Graphs of Bounded Genus}
\label{PlanarGenus}

The following recent result  by \citet{DJMMUW} is a key theorem. 

\begin{theorem}[\citep{DJMMUW}]
\label{PlanarProductTreewidth3}
Every planar graph is a subgraph of\/ $H \boxtimes P \boxtimes K_{3}$ for some graph\/ $H$ with treewidth at most\/ $3$ and some path\/ $P$.
\end{theorem}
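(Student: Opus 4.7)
The plan is to prove this product-structure result via a \emph{tripod decomposition} of planar triangulations. First observe that it suffices to handle plane triangulations, since adding edges to a planar graph can only make the stated containment harder. So let $G$ be a plane triangulation, pick a vertex $r$ on its outer face, and let $(V_0, V_1, \dots)$ be the BFS layering of $G$ rooted at $r$, with associated BFS spanning tree $T$. Let $P$ be the path with one vertex $x_i$ per nonempty layer $V_i$ joined consecutively. For each vertex $u$, let $Q_u$ denote its \emph{vertical path} in $T$ from $u$ up to $r$; by construction $|Q_u \cap V_i| \le 1$ for every $i$.

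The aim is to construct a partition $\mathcal{P}$ of $V(G)$ into \emph{tripods}, where each tripod is the union of at most three pairwise-disjoint vertical paths in $T$, such that each part meets each layer in at most three vertices. Writing $H$ for the quotient graph (vertices are the parts, with two parts adjacent whenever joined by an edge of $G$), such a partition immediately yields $G \subseteq H \boxtimes P \boxtimes K_3$: map each $v \in V_i$ belonging to a part $X \in \mathcal{P}$ to the triple $(X, x_i, j)$, where $j \in \{1,2,3\}$ labels $v$ within $X \cap V_i$. It then remains to build the partition in a way that forces $H$ to have treewidth at most $3$.

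The partition is produced by a ``peeling'' procedure inside the planar embedding. Start with a degenerate tripod formed by the three outer-face vertices together with their vertical paths. At each subsequent step the processed region is bounded by a closed curve made of vertical path fragments; by planarity, and the fact that every inner face is a triangle, one can always identify an interior face whose three corners lie on three distinct active vertical paths, and these three paths (together with all interior material they enclose) form the next tripod. The peeling naturally organises the tripods into a tree $T^\ast$ in which each new tripod is attached along shared vertical paths to at most three previously constructed tripods, which turn out to be precisely its neighbours in $H$.

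From this, a tree-decomposition of $H$ of width at most $3$ is read off: take the bag at each node of $T^\ast$ to be the corresponding tripod together with the at-most-three earlier tripods it borders. The main obstacle, and the combinatorial heart of the argument of \citet{DJMMUW}, is verifying two things together: that the peeling procedure can always find a face whose corners lie on three distinct active vertical paths, and that \emph{every} edge of $G$ leaving a given tripod goes into one of its at-most-three parent tripods in $T^\ast$. Both facts hinge on planarity, through the Jordan curve theorem applied to the boundaries of tripods, and on the fact that every face of a triangulation has exactly three corners --- which is precisely where the $3$'s in the treewidth bound and in the $K_3$ factor come from.
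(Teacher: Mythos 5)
This paper does not prove \cref{PlanarProductTreewidth3}; it is imported verbatim from the cited reference \citep{DJMMUW} (the queue-number paper), so there is no internal proof to compare against. Your sketch does correctly identify the ingredients of the proof given there: reduce to plane triangulations, take a BFS layering with BFS tree $T$, define vertical paths, partition the vertex set into tripods (unions of up to three vertical paths), and observe that any partition into parts meeting each layer at most three times, whose quotient $H$ has treewidth at most~$3$, yields $G\subseteq H\boxtimes P\boxtimes K_3$.

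That said, what you have is an outline, not a proof, and two steps you wave at are exactly where the work is. First, the claim that ``one can always identify an interior face whose three corners lie on three distinct active vertical paths'' is not the right statement and is not proved. In \citep{DJMMUW} the recursive step is: given a near-triangulated disc whose boundary is the union of at most three vertical paths, find an interior face $\tau$ and vertical paths $Q_1,Q_2,Q_3$ emanating \emph{upward from the three corners of} $\tau$ so that the tripod $Q_1\cup Q_2\cup Q_3$, together with $\tau$, cuts the disc into at most three smaller discs, each again bounded by at most three vertical paths drawn from the tripod and the old boundary. The corners of $\tau$ do not lie on the bounding paths; rather their vertical paths terminate just inside them. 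Establishing this needs a genuine argument (a Sperner-type colouring of the interior vertices by which boundary path their vertical path first meets), and one must also handle degenerate cases where the boundary consists of fewer than three nontrivial paths. Second, reading off a width-$3$ tree-decomposition of $H$ requires more than noting that each new tripod touches at most three earlier ones: you must verify that every edge of $H$ lies in some bag and that for each tripod the set of bags containing it is connected, which in turn relies on the Jordan-curve separation property of the recursion (each sub-disc can only contain edges to its own bounding tripods). Your statement that every edge of $G$ leaving a tripod goes to one of its parents is not literally true --- edges also go to later, descendant tripods --- but those edges are covered at the descendant's bag, which is what actually makes the decomposition valid. So the route is the right one, but the heart of the argument is the Sperner lemma and the boundary bookkeeping, and neither is present.
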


\cref{ProductPi*,TreewidthPi*,PlanarProductTreewidth3} imply that for every planar graph $G$, 
$$\pi(G) \leq \pi^*(G) \leq \pi^*( H \boxtimes P \boxtimes K_{3}) \leq 3\, \pi^*(H\boxtimes P) \leq 3 \cdot 4\,  \pi^*(H) \leq 3 \cdot 4  \cdot 4^3 = 768,$$
which proves \cref{PlanarPi}.

\medskip
For graphs of bounded Euler genus, \citet{DHHW21} proved the following strengthening of \cref{PlanarProductTreewidth3}.

\begin{theorem}[\citep{DHHW21}] 
\label{GenusProduct} 
Every graph\/ $G$ of Euler genus\/ $g$ is a subgraph of\/ $H \boxtimes P \boxtimes K_{\max\{2g,3\}}$ for some graph\/ $H$ with treewidth at most\/ $3$  and some path\/ $P$. 
\end{theorem}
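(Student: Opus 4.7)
My plan is to reduce \cref{GenusProduct} to the planar product structure theorem (\cref{PlanarProductTreewidth3}) via a topological cutting argument. Given a graph $G$ of Euler genus $g$, embed $G$ in a surface $\Sigma$ of that genus and fix a BFS spanning tree $T$ rooted at a vertex $r$, inducing the layering $(V_0, V_1, \ldots)$. For $g = 0$ the statement reduces to \cref{PlanarProductTreewidth3}, so assume $g \geq 1$.

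The key step is to cut $\Sigma$ along a carefully chosen system of cycles so that the resulting surface is a disk. By standard tree-cotree theory, one can select $2g$ non-tree edges $e_1, \ldots, e_{2g}$ whose fundamental cycles $T[u_i, v_i] + e_i$ form a basis of $H_1(\Sigma; \mathbb{Z}_2)$; cutting $\Sigma$ along these cycles produces a planar surface and turns $G$ into a planar graph $G^*$, in which each vertex $v$ of $G$ is replaced by several copies, one for each ``side'' of the cuts it borders. A topological counting argument should bound the number of copies per vertex by $\max\{2g, 3\}$, and the BFS layering of $G$ lifts to a layering of $G^*$ in which all copies of any $v \in V_i$ land in the same layer $V_i^*$.

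Applying \cref{PlanarProductTreewidth3} to $G^*$ now yields $G^* \subseteq H \boxtimes P \boxtimes K_3$ for some graph $H$ of treewidth at most $3$ and some path $P$ aligned with the layers. It remains to ``fold'' the copies back together so as to embed $G$ itself into $H \boxtimes P \boxtimes K_{\max\{2g, 3\}}$: for each $v \in V(G)$, its copies share a common $P$-coordinate, and the enlarged $K_{\max\{2g, 3\}}$ factor is used to absorb both the original $K_3$-label and an index distinguishing the copies, while choosing a single canonical $H$-coordinate per vertex. The main obstacle is this folding step: one must verify that every edge $uv \in E(G)$, which is realized by some pair of adjacent copies in $G^*$, remains realized by adjacent product coordinates after folding, and that a single $H$-coordinate can be selected for each $v$ without destroying the treewidth bound on $H$. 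Achieving this requires showing that all copies of $v$ can be placed within a single bag of the tree-decomposition produced by \cref{PlanarProductTreewidth3}, which is the combinatorial crux of the argument and probably drives the precise constant $\max\{2g,3\}$ in the statement.
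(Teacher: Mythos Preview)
The paper does not prove \cref{GenusProduct}; it is quoted from \cite{DJMMUW} and used as a black box. So there is no in-paper argument to compare against, and I will assess your outline on its own.

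Your plan has a real gap precisely at the folding step, and your diagnosis of how to close it does not work. If you apply \cref{PlanarProductTreewidth3} to the cut-open graph $G^*$ as a black box, each copy of a vertex $v$ receives its own $H$-coordinate, and nothing forces these coordinates to lie in a common bag of the width-$3$ tree-decomposition of $H$: the copies of $v$ sit on the boundary of the cut polygon and can be arbitrarily far apart in $G^*$. Picking one ``canonical'' $H$-coordinate for $v$ will then destroy adjacencies for edges of $G$ realised between non-canonical copies, while enlarging bags of $H$ to swallow all copies ruins the treewidth bound. The arithmetic also fails to close: even if every vertex had at most $c$ copies, combining the planar $K_3$ label with a copy index would give a $K_{3c}$ factor, not $K_{\max\{2g,3\}}$. (A side remark: for Euler genus $g$ the rank of $H_1(\Sigma;\mathbb{Z}_2)$ is $g$, so you want $g$ cotree edges, not $2g$; the factor $2g$ in the statement arises because each fundamental cycle decomposes into two root-to-leaf paths in the BFS tree.)

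The proof in \cite{DJMMUW} avoids folding altogether by not treating the planar case as a black box. One fixes the BFS layering of $G$ itself, takes the $\le 2g$ vertical BFS-tree paths coming from the $g$ fundamental cycles (these meet each layer in at most $2g$ vertices), and cuts along them to obtain a disk. The layered-partition (``tripod'') argument that underlies \cref{PlanarProductTreewidth3} is then rerun inside this disk with the cut paths prescribed as part of the partition from the outset. Because the cut vertices are already a single part per layer, no post-hoc identification of copies is needed, and one directly obtains layer-width $\max\{2g,3\}$ with quotient treewidth at most~$3$. If you want to carry out your programme, you should open up the proof of \cref{PlanarProductTreewidth3} rather than invoke it.
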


\cref{ProductPi*,TreewidthPi*,GenusProduct} imply that for every graph $G$ with Euler genus $g$, 
\begin{align*}
\pi(G) \leq \pi^*(G) 
\leq \pi^*( H \boxtimes P \boxtimes K_{\max\{2g,3\}} ) 
\leq \max\{2g,3\}  \cdot \pi^*( H\boxtimes P ) 
& \leq \max\{2g,3\}  \cdot 4 \cdot \pi^*( H ) \\
& \leq  \max\{2g,3\}  \cdot 4^4\\
& =  256 \max\{2g,3\},
\end{align*}
which proves \cref{GenusPi}.

\section{Excluded Minors}
\label{Minors}

Our results for graphs excluding a minor depend on the following version of the graph minor structure theorem of \citet{RS-XVI}. A tree-decomposition $(B_x :x\in V(T))$ of a graph $G$ is \emph{$r$-rich} if $B_x\cap B_y$ is a clique in $G$ on at most $r$ vertices, for each edge $xy\in E(T)$. 

\begin{theorem}[\citep{DMW17}]
\label{ProduceRichDecomp}
For every graph\/ $X$, there are integers\/ $r\geq 1$ and\/ $k\geq 1$ such that every\/ $X$-minor-free graph\/ $G_0$ is a spanning subgraph of a graph\/ $G$ that has an\/ $r$-rich tree-decomposition such that each bag induces a\/ $k$-almost-embeddable subgraph of\/ $G$.
\end{theorem}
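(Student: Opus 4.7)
The plan is to bootstrap from the Robertson--Seymour graph minor structure theorem by completing adhesion sets to cliques, so that the bags of the resulting tree-decomposition coincide with the torsos (which are what the structure theorem controls).

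First, I would apply the Robertson--Seymour graph minor structure theorem to $G_0$: there exist integers $r_0$ and $k_0$ depending only on $X$ such that $G_0$ admits a tree-decomposition $(B_x : x \in V(T))$ with \emph{adhesion} at most $r_0$, meaning $|B_x \cap B_y| \leq r_0$ for every edge $xy \in E(T)$, and in which the torso of each bag is $k_0$-almost-embeddable (in the sense appropriate for the corrected statement, i.e., after allowing a bounded number of apex vertices and vortices, possibly with the $K_{k_0}+$ modification flagged in the erratum). Recall that the torso of $B_x$ is the graph obtained from $G_0[B_x]$ by adding all edges needed to turn each adhesion $B_x \cap B_y$ into a clique.

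Next, I would define $G$ as the graph obtained from $G_0$ by adding, for every edge $xy \in E(T)$, all missing edges between pairs of non-adjacent vertices of $B_x \cap B_y$. By construction $G_0$ is a spanning subgraph of $G$. Adding edges whose endpoints lie in a common bag preserves the defining properties of a tree-decomposition, so $(B_x : x \in V(T))$ is still a tree-decomposition of $G$; now each adhesion $B_x \cap B_y$ is a clique in $G$ of size at most $r_0$, establishing the $r$-rich property with $r := r_0$.

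It remains to verify that $G[B_x]$ is $k$-almost-embeddable for each $x \in V(T)$. The key point is that the edges added in forming $G$ are precisely those needed to make every adhesion a clique, so for every $x \in V(T)$ the induced subgraph $G[B_x]$ equals the torso of $B_x$ computed from the original tree-decomposition of $G_0$. Hence each $G[B_x]$ is $k_0$-almost-embeddable, and we may take $k := k_0$. The main subtlety, rather than a genuine obstacle, is keeping the definitions straight across the Robertson--Seymour theorem and the ``$K_k +$'' convention used here: in particular one must confirm that completing an adhesion that lives inside the apex/vortex/embedded parts of a torso does not destroy almost-embeddability, which follows because the adhesion already sits inside the torso as a clique and no new structural change is required beyond that.
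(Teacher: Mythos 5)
Your proof is correct and follows the same route taken by the cited paper of Dujmovi\'c, Morin, and Wood: start from the Robertson--Seymour structure theorem, which gives a tree-decomposition of $G_0$ with bounded adhesion and $k$-almost-embeddable torsos, and complete every adhesion set to a clique so that each bag of the (unchanged) tree structure induces exactly the corresponding torso. The one step you glide over is the identity $G[B_x]=\text{torso of }B_x$, which needs the subtree property of tree-decompositions: if $u,v\in B_x$ both lie in an adhesion $B_{y'}\cap B_{z'}$ for a tree edge $y'z'$ not incident to $x$, then since $\{t\in V(T): u,v\in B_t\}$ induces a subtree of $T$ containing $x$, $y'$, and $z'$, the pair $u,v$ already lies in $B_x\cap B_w$ for the neighbour $w$ of $x$ on the $T$-path to $y'$, so the edges added inside $B_x$ are precisely the torso edges and nothing more.
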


We omit the definition of $k$-almost embeddable from this paper, since we do not need it. All we need to know is the following theorem of \citet{DJMMUW}, where $A+B$ is the complete join of graphs $A$ and $B$.

\begin{theorem}[\citep{DJMMUW}] 
\label{AlmostEmbeddableStructure}
Every\/ $k$-almost embeddable graph is a subgraph of\/ $K_k + ( H\boxtimes P \boxtimes K_{\max\{6k,1\}} )$ for some graph\/ $H$ with treewidth at most\/ $11k+10$.
\end{theorem}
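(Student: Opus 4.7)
The plan is to decompose a $k$-almost embeddable graph $G$ into its three defining ingredients and handle each separately. Recall that a $k$-almost embeddable graph consists of (i) a set $A$ of at most $k$ \emph{apex} vertices, (ii) a graph $G_0$ embedded in a surface $\Sigma$ of Euler genus at most $k$, and (iii) at most $k$ \emph{vortices} of width at most $k$ attached to pairwise disjoint faces of this embedding. The apex vertices may be joined arbitrarily to the rest of $G$, and this is exactly what the complete join $K_k + \cdot$ in the target structure accommodates: placing $A$ in the $K_k$ part, it suffices to show that $G - A$ is a subgraph of $H \boxtimes P \boxtimes K_{\max\{6k,1\}}$ for some graph $H$ with treewidth at most $11k + 10$ and some path $P$.

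The first step is to strip off the vortices. Let $G''$ be the graph obtained from $G - A$ by deleting the interior of each vortex, keeping only its attachment cycle in the embedding. Then $G''$ embeds in $\Sigma$, so by \cref{GenusProduct} there exist a graph $H_0$ with treewidth at most $3$ and a path $P$ such that $G'' \subseteq H_0 \boxtimes P \boxtimes K_{\max\{2g,3\}}$, where $g \le k$ is the Euler genus of $\Sigma$. Since $\max\{2g,3\} \le 6k$ when $k \ge 1$, the clique factor has room to grow. By inspecting the proof of \cref{GenusProduct}, which builds $P$ from a BFS-layering, one arranges the embedding so that for each vortex the attachment cycle $c_1, c_2, \dots, c_t$ occupies consecutive $P$-coordinates at a common $H_0$-coordinate, leaving the $K$-coordinate free as a second index into which the vortex bags can be inserted.

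The main obstacle is extending this product decomposition of $G''$ to one of $G - A$ by absorbing each vortex, which requires inflating both the clique factor and the treewidth of $H_0$. Each vortex carries a linear decomposition $(X_1,\dots,X_t)$ of width at most $k$ indexed along its attachment cycle; the at most $k+1$ vertices of bag $X_j$ are placed at $P$-position $j$ with distinct $K$-coordinates inside the enlarged factor $K_{\max\{6k,1\}}$, the factor of roughly $3$ over $\max\{2g,3\}$ providing the slots needed for vortex vertices alongside the original surface vertices. To realise the edges within a single vortex bag and between consecutive bags $X_j$, $X_{j+1}$ as strong-product edges, I would enlarge $H_0$ to a graph $H$ by attaching, at the unique $H_0$-vertex used by each vortex, a private clique of $O(k)$ new vertices that plays the role of a ``track'' along which the vortex path-decomposition unfolds. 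Because the vortex faces are pairwise disjoint, these private cliques attach at distinct $H_0$-vertices and can be gathered under a common root of the tree-decomposition of $H_0$, so their contributions to the treewidth add linearly in $k$ rather than multiplying, yielding the stated bound $\mathrm{tw}(H) \le 11k + 10$ after the explicit constant optimisation carried out in \citep{DJMMUW}. The apex set is then handled trivially by the complete join $K_k + \cdot$, completing the proof.
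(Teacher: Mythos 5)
This theorem is quoted by the paper from \citet{DJMMUW} and is explicitly used as a black box; the paper does not prove it (it even declines to define ``$k$-almost-embeddable''). So there is no internal proof to compare against, and the relevant question is whether your sketch is a sound reconstruction of the cited result.

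Your high-level roadmap --- absorb the apex set into the $K_k+$ join, embed the surface part via \cref{GenusProduct}, then re-insert vortices by enlarging the $K$-factor and $H$ --- is the right skeleton and matches the outline of the DJMMUW argument. The vortex-reinsertion step, however, has a genuine gap. You assert that one can arrange matters ``so that for each vortex the attachment cycle $c_1,\ldots,c_t$ occupies consecutive $P$-coordinates at a common $H_0$-coordinate.'' This is impossible in general: the $P$-coordinate is a layering coordinate, so adjacent vertices differ by at most one layer. If $c_1,\ldots,c_t$ occupied $t$ distinct consecutive layers, then $c_1$ and $c_t$, which are adjacent on the face boundary, would differ by $t-1\ge 2$ layers, a contradiction once $t\ge 3$. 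Likewise there is no reason for the whole boundary cycle of a vortex face to lie at a single $H_0$-vertex of the tripod-based partition underlying \cref{GenusProduct}; a face boundary will generally meet many parts of $H_0$. Because this arrangement fails, the subsequent construction (a private clique of size $O(k)$ hanging off ``the unique $H_0$-vertex used by each vortex'') has nothing to attach to, and the claimed treewidth bound $11k+10$ is deferred to ``the explicit constant optimisation carried out in \citep{DJMMUW}'' rather than derived. The actual argument in \citep{DJMMUW} is substantially more delicate: it merges each vortex bag $X_j$ into the part and layer of the corresponding boundary vertex $c_j$, uses the width-$k$ linear decomposition together with the fact that consecutive boundary vertices lie in adjacent layers, and tracks how this inflates both the $K$-factor (to $6k$) and the treewidth of $H$ (to $11k+10$). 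That bookkeeping is the heart of the proof and is missing from your sketch.
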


Observe that $\pi^*(G+K_k)=\pi^*(G)+k$ for every graph $G$ and integer $k\ge 0$. Thus \cref{AlmostEmbeddableStructure,ProductPi*,TreewidthPi*} imply that for every $k$-almost embeddable graph $G$, 
\begin{equation}
\label{AlmostEmbeddablePi}
\pi(G) \leq \pi^*(G) 
\leq \pi^*( K_k + ( H\boxtimes P \boxtimes K_{\max\{6k,1\}}) )
\leq k+ 6k \, \pi^*(H\boxtimes P) 
\leq k+ 6k \cdot 4  \pi^*(H) 
\leq k+ 6k \cdot 4^{11(k+1)}.
\end{equation}

\citet{DMW17} proved the following lemma, which generalises a result of \citet{KP-DM08}. 

\begin{lemma}[\citep{DMW17}] 
\label{RichColour}
Let\/ $G$ be a graph that has an\/ $r$-rich tree-decomposition such that the subgraph induced by each bag is nonrepetitively\/ $c$-colourable. Then\/ $\pi(G) \leq  c\,4^r$.
\end{lemma}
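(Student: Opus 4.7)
The plan is to prove \cref{RichColour} by adapting the proof of \cref{TreewidthPi*}, with the tree $T$ of the $r$-rich tree-decomposition playing the role of the BFS-layering of a chordal graph. Throughout, root $T$ at some node $r_0$. For each $v\in V(G)$, let $x_v$ be the node of the subtree $T_v:=\{y\in V(T): v\in B_y\}$ closest to $r_0$, and set $d(v):=\mathrm{dist}_T(r_0,x_v)$ and $V_i:=\{v: d(v)=i\}$.

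The first step is a structural observation about how edges of $G$ interact with this depth function. If $uv\in E(G)$ with $d(u)=d(v)=i$, then $u,v$ share a common bag $B_z$, and $x_u,x_v$ are both ancestors of $z$ in $T$ at the same depth $i$, so $x_u=x_v$; consequently $u$ and $v$ both lie in $B_{x_u}$. If instead $d(u)<d(v)$, then $x_u$ is a proper ancestor of $x_v$ on the $T$-path of bags containing $u$, so $u$ belongs to $B_{x_v}\cap B_{p(x_v)}$, a clique of size at most $r$. Thus the ``horizontal'' subgraph $H$ of $G$ with edge set $\{uv\in E(G): d(u)=d(v)\}$ decomposes as a disjoint union of induced subgraphs of the bags $G[B_x]$, so it is nonrepetitively $c$-colourable by hypothesis; let $\phi_1$ be such a colouring.

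The second step builds a ``vertical'' colouring $\phi_2$ with $4^r$ colours that, combined with $\phi_1$, is nonrepetitive. The idea is to track, for each vertex $v$, its role in the chain of separator cliques along the $r_0$-to-$x_v$ path of $T$: since separators have at most $r$ positions, one can associate to $v$ an $r$-tuple of $\{0,1,2,3\}$-labels, one per ``slot'', obtained by applying \cref{cla:path} to a path in $T$ along which slot $i$ is occupied. Combining, set $\phi:=(\phi_1,\phi_2)$, giving $c\cdot 4^r$ colours. The verification then follows the template of \cref{TreewidthPi*}: for a $\phi$-repetitive path $v_1,\dots,v_{2t}$ in $G$, the vertical coordinate $\phi_2$, via the boring property of \cref{cla:path} applied slot-by-slot, forces $x_{v_i}=x_{v_{t+i}}$ for all $i$; hence each $v_i$ and $v_{t+i}$ lie in a common bag, and then the horizontal coordinate $\phi_1$, being nonrepetitive on that bag, forces $v_i=v_{t+i}$ for some $i$, contradicting that the path has no repeated vertices.

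The main obstacle is defining $\phi_2$ cleanly. In \cref{TreewidthPi*} the analogous step contracts BFS-layers to a genuine path and invokes \cref{cla:path} a single time. Here the corresponding object is a tree $T$ rather than a path, and a repetitive path in $G$ projects to a walk in $T$ that zigzags between ancestors and descendants; moreover each step can ``carry'' up to $r$ separator vertices, hence the factor $4^r$ rather than $4$. Making this precise requires either a tree-analogue of \cref{cla:path} or a careful decomposition of $T$ into path-like substructures along which slot occupancy is consistent, so that the $r$ slot-colourings can be defined independently and aggregate into a single $4^r$-colouring with the desired property. This reduction to \cref{cla:path} is the delicate heart of the argument.
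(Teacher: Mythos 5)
The paper does not give a proof of \cref{RichColour}; it is cited verbatim from \citep{DMW17}. So there is no ``paper proof'' to compare against, and your proposal has to stand on its own. It doesn't yet, for two reasons, one of which you have already flagged.

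Your preliminary observations are correct: rooting $T$ and defining $x_v$ as the topmost node of $T_v$, if $uv\in E(G)$ then $x_u$ and $x_v$ are comparable in $T$, equal-depth neighbours satisfy $x_u=x_v$ (so the ``horizontal'' subgraph is a vertex-disjoint union of induced subgraphs of bags and is nonrepetitively $c$-colourable), and if $d(u)<d(v)$ then $u\in B_{x_v}\cap B_{p(x_v)}$. These are genuine structural facts and a sound starting point.

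The gap is exactly where you say it is, but it is more than a matter of ``making $\phi_2$ clean'': the plan as described does not work. You want a $4^r$-colouring $\phi_2$ such that $\phi$-repetitive paths satisfy $x_{v_i}=x_{v_{t+i}}$ for all $i$, obtained by ``applying \cref{cla:path} slot-by-slot''. But \cref{cla:path} is a statement about paths; the natural projection $v\mapsto x_v$ lands in the tree $T$, not in a path, and trees do not admit bounded colourings in which every repetitively coloured (lazy) walk is boring --- already a star forces two leaves of the same colour and hence a repetitive non-boring walk through the centre. In the KP/\cref{TreewidthPi*} argument the path on which \cref{cla:path} is applied is the path of BFS layers of a chordal supergraph, which is a genuine path; in your setup there is no single path to project onto, and the ``$r$ independent slots'' idea is not defined (even the claim that separator ``slots'' can be assigned consistently with $\le r$ labels needs an argument). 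The known proof in \citep{DMW17} handles this by an induction on $r$, each inductive step peeling off one BFS layer of a chordal supergraph and showing the layers inherit a rich decomposition of smaller adhesion; this is where the $4^r=4\cdot 4^{r-1}$ factor really comes from, not from $r$ parallel applications of \cref{cla:path}.

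There is a second, smaller gap in the final paragraph: even granting $x_{v_i}=x_{v_{t+i}}$ for all $i$, it does not follow directly that ``$\phi_1$, being nonrepetitive on that bag, forces $v_i=v_{t+i}$''. The path $v_1,\dots,v_{2t}$ need not lie in one bag; one must pass to the minimum-depth node $x^*$, argue (via the separator cliques, in the spirit of shadow-completeness and \cref{cla:sc}) that the restriction of the path to $A_{x^*}$ is a lazy walk in $G[A_{x^*}]$, and then finish. That step needs either strong nonrepetitiveness of the bag colouring (not assumed here) or additional care about consecutive equalities in the restricted walk, and you have not addressed it. So the proposal correctly identifies the right high-level shape, but both the construction of $\phi_2$ and the closing argument are missing.
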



%

\cref{ProduceRichDecomp,RichColour} and \eqref{AlmostEmbeddablePi} with $c=k+6k \cdot 4^{11(k+1)}$ imply that for every graph $X$ and every $X$-minor-free graph $G$, 
\begin{equation*}
\pi(G) \leq \pi^*(G) \leq (k + 6k \cdot 4^{11(k+1)} )  4^r ,
\end{equation*}
which implies \cref{MinorPi} since $k$ and $r$ depend only on $X$. 

To obtain our result for graphs excluding a fixed topological minor, we use the following version of the structure theorem of \citet{GM15}.

%

\begin{theorem}[\citep{DMW17}]
\label{TopoProduceRichDecomp}
For every graph\/ $X$, there are integers\/ $r\geq 1$ and\/ $k\geq 1$ such that every\/ $X$-topological-minor-free graph\/ $G_0$ is a spanning subgraph of a graph\/ $G$ that has an\/ $r$-rich tree-decomposition such that the subgraph induced by each bag is\/ $k$-almost-embeddable or has at most\/ $k$ vertices with degree greater than\/ $k$.
\end{theorem}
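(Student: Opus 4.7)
The plan is to mirror the proof of \cref{ProduceRichDecomp}, replacing the Robertson--Seymour graph minor structure theorem with the Grohe--Marx topological minor structure theorem. Conceptually, the Grohe--Marx theorem already provides a tree-decomposition with bounded adhesion size whose torsos come in two flavours: $k_0$-almost-embeddable, or ``bounded degree apart from a bounded apex set''. The only thing missing is that the adhesions need to be cliques of bounded size, and this is remedied by an edge-completion argument that preserves both structural types up to an adjustment of constants.

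First, I would apply the Grohe--Marx structure theorem to the given $X$-topological-minor-free graph $G_0$. This yields a tree-decomposition $(B_x : x\in V(T))$ of $G_0$ such that every adhesion $B_x\cap B_y$ has size at most some integer $r_0=r_0(X)$, and such that for each $x\in V(T)$, the torso at $x$ is either $k_0$-almost-embeddable or has at most $k_0$ vertices of degree greater than $k_0$ (within the torso), where $k_0=k_0(X)$.

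Second, I would build $G$ from $G_0$ by adding all edges inside every adhesion $B_x\cap B_y$ for $xy\in E(T)$. The same collection of bags is a tree-decomposition of $G$ (each bag contains all its adhesions), each adhesion is now a clique of size at most $r_0$, and $G_0$ is a spanning subgraph of $G$. So the decomposition is $r_0$-rich.

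Third, I would verify that each bag still induces a subgraph of one of the two required types, after replacing $k_0$ by a suitable $k\ge k_0+r_0$. For a torso that was $k_0$-almost-embeddable, each adhesion is a vertex set of size at most $r_0$; since an almost-embeddable graph remains almost-embeddable (with its apex set grown by $r_0$) after attaching an arbitrary graph on at most $r_0$ vertices via that set, the modified bag is $k$-almost-embeddable for $k\ge k_0+r_0$. For a torso with at most $k_0$ vertices of degree exceeding $k_0$, each adhesion contributes at most $r_0$ new edges incident to at most $r_0$ vertices of that bag; adding these vertices to the exceptional set shows the new bag has at most $k_0+(\text{number of adjacent bags})\cdot r_0$ high-degree vertices, but since only vertices belonging to some adhesion can see a degree increase, and each such vertex lies in a bounded number of adhesions due to the bounded intersection property, one can still bound the number of exceptional vertices, again by choosing $k$ large enough in terms of $k_0$ and $r_0$.

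The main obstacle is this last verification for the bounded-degree-plus-apices case, because the edges added in different adhesions at the same bag could in principle accumulate; one has to argue that the set of vertices whose degree grows is still controlled by the adhesion structure (and in particular is itself of bounded size once one treats the apex set correctly). For the minor-free case this is exactly the delicate step carried out in \citep{DMW17}, and the same accounting transfers verbatim to the bounded-degree-plus-apices torsos since these torsos are of the same combinatorial shape as the apex parts already handled in the minor-free proof. With $r:=r_0$ and $k$ a sufficiently large function of $k_0$ and $r_0$, the resulting $G$ has the claimed $r$-rich tree-decomposition.
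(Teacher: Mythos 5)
The paper does not prove this statement itself; it simply cites \citep{DMW17}, so there is no in-text proof to compare against. Your overall plan, namely apply the Grohe--Marx structure theorem to get a tree-decomposition with bounded adhesion whose torsos are of the two required types, then add all edges inside each adhesion set to make the decomposition rich, is indeed the natural route to this statement, and the first two steps are fine: the same collection of bags is a tree-decomposition of the augmented graph $G$, the adhesions become cliques of size at most $r_0$, and $G_0$ is a spanning subgraph of $G$.

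Your step 3, however, is both mishandled and over-complicated. You invoke a ``bounded intersection property'' to control how many adhesions a given vertex can lie in; no such property holds in a general tree-decomposition, where a single vertex can appear in arbitrarily many adhesions at one bag. The good news is that the worry is misplaced. After adding all adhesion cliques, the subgraph $G[B_x]$ induced by any bag $B_x$ is \emph{exactly} the torso of $G_0$ at $x$. Indeed, every torso edge at $x$ is added by construction, and conversely any edge $uv$ introduced by some far-away adhesion $B_z\cap B_w$ with $u,v\in B_x$ already arises from an adhesion at $x$: the subtrees of $T$ induced by the bags containing $u$ and by the bags containing $v$ are connected and both contain $x$, $z$, $w$, so their intersection contains the $T$-path from $x$ to $z$, and the neighbor $y$ of $x$ on that path satisfies $u,v\in B_x\cap B_y$. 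Since the Grohe--Marx guarantee is stated for torsos, each $G[B_x]$ is already $k_0$-almost-embeddable or has at most $k_0$ vertices of degree greater than $k_0$, with no inflation of $k$ by $r_0$ and no ``apex attachment'' argument needed. Your closing appeal to ``the same accounting transfers verbatim from the minor-free proof'' is an acknowledgement of a gap rather than a proof; the correct observation closes it more simply than you anticipated.
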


\citet{AGHR02} proved that graphs with maximum degree $\Delta$ are nonrepetitively $O(\Delta^2)$-colourable. The best known bound is due to \citet{DJKW16}.

\begin{theorem}[\cite{DJKW16}]
\label{NonRepColourDegree}
Every graph with maximum degree\/ $\Delta\geq 2$ is nonrepetitively\/ $(\Delta^2+O(\Delta^{5/3}))$-colourable.
\end{theorem}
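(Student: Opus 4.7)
The plan is to prove this by entropy compression, which has become the standard tool for such nonrepetitive colouring results since the work of Grytczuk, Kozik and Micek. We aim to show that for $C = \Delta^2 + c\,\Delta^{5/3}$, with $c$ an absolute constant, a randomised colouring procedure terminates almost surely with a nonrepetitive $C$-colouring of $G$.

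First I would fix an ordering $v_1, \dots, v_n$ of $V(G)$ and an infinite stream of random integers $r_1, r_2, \dots$ drawn uniformly from $\{1, \dots, C\}$. The algorithm processes the vertices left to right, maintaining a partial colouring that is nonrepetitive at every step. When colouring the next uncoloured vertex $v$, we assign it the next unused random value. If this assignment creates a repetitively coloured path $(u_1, \dots, u_{2t})$ ending at $v = u_{2t}$, we erase the colours of $u_{t+1}, \dots, u_{2t}$ and log the pair (length $2t$, identity of the first half $u_1, \dots, u_t$); otherwise we advance. The algorithm halts when every vertex is coloured.

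The core of the entropy compression method is a bijective decoding: from the log and the final partial colouring we can recover the consumed random stream. Thus if the algorithm has consumed $N$ random values and produced a log $\mathcal L$, the number of possible triples (log, current colouring) must be at least $C^N$. One now counts: a path of length $2t$ through a fixed vertex $v$ admits at most $\Delta^{2t-1}$ extensions once $v$'s position along it is fixed, and more carefully for paths ending at the current vertex the first half is essentially determined by the history, giving roughly $\Delta^{2t-2}$ genuinely free choices. Summing the associated generating function $\sum_{t\ge 1} a_t x^t$ at $x = 1/C$ and demanding convergence yields a threshold of the form $C \ge \Delta^2 + O(\Delta^{5/3})$; a standard Markov-inequality argument then concludes that the algorithm terminates in finite expected time, so a nonrepetitive $C$-colouring exists.

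The main obstacle is sharpening the path count to eliminate the naive factor of $2t$ (number of positions of $v$ on the path) without losing the leading constant $1$ in front of $\Delta^2$. One has to partition log entries by the \emph{shape} of the path (which end is being discovered and where previously-erased vertices lie), so that the typical entry contributes $\Delta^{2t-2}$ rather than $\Delta^{2t-1}$; paths not of this shape are of lower order and can be absorbed into the $O(\Delta^{5/3})$ correction. Optimising the tradeoff between this error term and the gap $C - \Delta^2$ is the delicate analytic step, and following the calculation of \citep{DJKW16} gives exactly the $\Delta^{5/3}$ exponent claimed in the theorem.
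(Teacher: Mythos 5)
The paper does not prove this theorem: it is stated with a citation to \citet{DJKW16} and used as a black box, so there is no in-paper proof to compare against. Your sketch correctly identifies entropy compression as the method of \citet{DJKW16}, and the outline of the algorithm (sequential colouring, erasing the second half of a newly created repetitively coloured path, logging enough to invert the step) and of the decoding (recovering erased colours from the matching colours on the surviving first half) is broadly faithful to that paper. However, what you have written is a summary of the approach rather than a proof. The decisive steps---sharpening the per-event count so that a log entry of half-length $t$ costs roughly $\Delta^{2t-2}$ rather than $\Delta^{2t-1}$, eliminating the extra polynomial-in-$t$ factor, and then optimising the slack between $C$ and $\Delta^2$ to obtain the $\Delta^{5/3}$ error term---are exactly where the theorem lives, and you explicitly defer them to \citet{DJKW16} (``following the calculation of [DJKW16]''). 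One caution on the part you do spell out: the phrase ``the first half is essentially determined by the history'' overstates the situation. The decoding recovers the erased colours from the first-half colours \emph{once the path has been identified}, but identifying the path from the log is precisely the cost that the entropy-compression accounting has to control, and bounding that cost tightly (not merely by the crude $\Delta^{2t-1}$) is the nontrivial combinatorial content. As it stands, your proposal would need the full path-counting lemma of \citet{DJKW16} filled in to constitute a proof.
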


\cref{NonRepColourDegree} implies that if a graph has at most $k$ vertices with degree greater than $k$, then it is nonrepetitively 
$c'$-colourable for some constant $c'= k^2+O(k^{5/3}) + k$. 
\cref{TopoProduceRichDecomp,RichColour} and \eqref{AlmostEmbeddablePi} with 
$c=\max\{ k+ 6k \cdot 4^{11(k+1)},  c'  \}$ 
imply that for every graph $X$, every $X$-topological-minor-free graph $G$ satisfies 
$\pi(G) \leq \pi^*(G) \leq c \cdot 4^r$, which implies \cref{TopoMinorPi}, since $c$ and $r$ depend only on $X$. 


\section*{Acknowledgements}

This research was completed at the Workshop on Graph Theory held at Bellairs Research Institute in April 2019. Thanks to the other workshop participants for creating a productive working atmosphere. Thanks to Andr\'e K{\"u}ndgen and  Jaros{\l}aw Grytczuk for helpful comments on an early draft. 

\label{lastpage}

  \let\oldthebibliography=\thebibliography
  \let\endoldthebibliography=\endthebibliography
  \renewenvironment{thebibliography}[1]{%
    \begin{oldthebibliography}{#1}%
      \setlength{\parskip}{0.0ex}%
      \setlength{\itemsep}{0.0ex}%
  }{\end{oldthebibliography}}


\def\soft#1{\leavevmode\setbox0=\hbox{h}\dimen7=\ht0\advance \dimen7
	by-1ex\relax\if t#1\relax\rlap{\raise.6\dimen7
		\hbox{\kern.3ex\char'47}}#1\relax\else\if T#1\relax
	\rlap{\raise.5\dimen7\hbox{\kern1.3ex\char'47}}#1\relax \else\if
	d#1\relax\rlap{\raise.5\dimen7\hbox{\kern.9ex \char'47}}#1\relax\else\if
	D#1\relax\rlap{\raise.5\dimen7 \hbox{\kern1.4ex\char'47}}#1\relax\else\if
	l#1\relax \rlap{\raise.5\dimen7\hbox{\kern.4ex\char'47}}#1\relax \else\if
	L#1\relax\rlap{\raise.5\dimen7\hbox{\kern.7ex
			\char'47}}#1\relax\else\message{accent \string\soft \space #1 not
		defined!}#1\relax\fi\fi\fi\fi\fi\fi}


\begin{aicauthors}
	\begin{authorinfo}[vd]
		Vida Dujmovi{\'c}\\
		School of Computer Science and Electrical Engineering\\
		University of Ottawa\\
		Ottawa, Canada\\
		\texttt{vida.dujmovic@uottawa.ca}\\
		\url{http://cglab.ca/~vida/} 
	\end{authorinfo}
	\begin{authorinfo}[le]
		Louis Esperet\\ 
		Laboratoire G-SCOP\\
		(CNRS, Univ.\ Grenoble Alpes)\\
		Grenoble, France \\
		\texttt{louis.esperet@grenoble-inp.fr}\\
		\url{https://oc.g-scop.grenoble-inp.fr/esperet/} 
	\end{authorinfo}
	\begin{authorinfo}[gj]
		Gwena\"{e}l Joret\\
		D\'epartement d'Informatique\\ 
		Universit\'e Libre de Bruxelles\\
		Brussels, Belgium \\
		\texttt{gjoret@ulb.ac.be}\\
		\url{http://di.ulb.ac.be/algo/gjoret/}
	\end{authorinfo}
	\begin{authorinfo}[bw]
		Bartosz Walczak\\
		Department of Theoretical Computer Science\\ 
		Faculty of Mathematics and Computer Science\\ Jagiellonian University\\
		Krak\'ow, Poland\\ 
		\texttt{walczak@tcs.uj.edu.pl}\\
		\url{http://www.tcs.uj.edu.pl/walczak}
	\end{authorinfo}
	\begin{authorinfo}[dw]
		David R. Wood\\
		School of Mathematics\\
		Monash   University\\
		Melbourne, Australia\\ 
		\texttt{david.wood@monash.edu}\\
		\url{http://users.monash.edu/~davidwo/}
	\end{authorinfo}
\end{aicauthors}

\end{document}